\title{\textsc{Difference between families of weakly and strongly maximal integral lattice-free polytopes}}
\author{Gennadiy Averkov\footnote{Institute of Mathematical Optimization, Fakult\"at f\"ur Mathematik, Otto-von-Guericke-Universit\"at Magdeburg, Universit\"atsplatz~2,  39106 Magdeburg, Germany. Email: averkov@ovgu.de}}
\newcommand{\rmcmd}[1]{\mathop{\mathrm{#1}}\nolimits}
\newcommand{\conv}{\rmcmd{conv}}
\newcommand{\aff}{\rmcmd{aff}}
\newcommand{\relintr}{\rmcmd{relint}}
\newcommand{\R}{\mathbb{R}}
\newcommand{\N}{\mathbb{N}}
\newcommand{\Z}{\mathbb{Z}}
\newcommand{\Aff}{\rmcmd{Aff}}
\newcommand{\cardbig}[1]{\bigl|#1\bigr|}
\newcommand{\term}[1]{\emph{#1}}
\newcommand{\setcond}[2]{\left\{#1 \, : \, #2 \right\}}
\newcommand{\intr}{\rmcmd{int}}
\newcommand{\header}[1]{\textup{(#1)}}
\newcommand{\onevec}{\mathbbm{1}}
\newcommand{\cA}{\mathcal{A}}
\newcommand{\cM}{\mathcal{M}}
\newcommand{\cX}{\mathcal{X}}
\newcommand{\II}[1]{{#1}_I}
\newcommand{\cL}{\mathcal{L}}
\newtheorem{nn}{}
\newtheorem{theorem}[nn]{Theorem}
\newtheorem{proposition}[nn]{Proposition}
\newtheorem{lemma}[nn]{Lemma}
\newtheorem{remark}[nn]{Remark}
\newtheorem{question}[nn]{Question}
\newtheorem{corollary}[nn]{Corollary}
\theoremstyle{definition}
\newtheorem{definition}[nn]{Definition}
\newtheorem*{acknowledgements*}{Acknowledgements}
\numberwithin{equation}{section}
\begin{document}

\maketitle

\begin{abstract}
	A $d$-dimensional  closed convex set $K$ in $\R^d$ is said to be lattice-free if the interior of $K$ is disjoint with $\Z^d$. We consider the following two families of lattice-free polytopes: the family $\cL^d$ of integral lattice-free polytopes in $\R^d$ that are not properly contained in another integral lattice-free polytope and its subfamily $\cM^d$ consisting of integral lattice-free polytopes in $\R^d$ which are not properly contained in another lattice-free set. It is known that $\cM^d = \cL^d$ holds for $d \le 3$ and, for each $d \ge 4$, $\cM^d$ is a proper subfamily of $\cL^d$. We derive a super-exponential lower bound on the number of polytopes in $\cL^d \setminus \cM^d$ (with standard identification of integral polytopes up to affine unimodular transformations). 
\end{abstract}

\newtheoremstyle{itsemicolon}{}{}{\mdseries\rmfamily}{}{\itshape}{:}{ }{}

\newtheoremstyle{itdot}{}{}{\mdseries\rmfamily}{}{\itshape}{:}{ }{}

\theoremstyle{itdot}





\section{Introduction}

By $|X|$ we denote the cardinality of a finite set $X$. Let $\N$ be the set of all positive integers and let $d \in \N$ be the dimension. We call elements of $\Z^d$ are called \term{integral points} or \emph{integral vectors}. We call a polyhedron $P \subseteq \R^d$ \term{integral} if $P$ is the convex hull of $P \cap \Z^d$. Let $\Aff(\Z^d)$ be the group of affine transformations $A: \R^d \to \R^d$ with $A(\Z^d) = \Z^d$. We call elements of $\Aff(\Z^d)$ \emph{affine unimodular transformations}. For a family $\cX$ of subsets of $\R^d$, we consider the family of equivalence classes
\[
\cX / \Aff(\Z^d) := \setcond{ \setcond{A (X)}{A \in \Aff(\Z^d)} }{X  \in \cX}
\]  
with respect to identification of the elements of $\cX$ up to affine unimodular transformations. A subset $K$ of $\R^d$ is called \term{lattice-free} if $K$ is closed, convex, $d$-dimensional and the interior of $K$ contains no points from $\Z^d$. A set $K$ is called \emph{maximal lattice-free} if $K$ is lattice-free and is not a proper subset of another lattice-free set. 

Our objective is to study the relationship between the following two families of integral lattice-free polytopes:
\begin{itemize}
	\item The family $\cL^d$ of integral lattice-free polytopes $P$ in $\R^d$ such that there exists no integral \underline{lattice-free polytope} properly containing $P$. We call elements of $\cL^d$ \emph{weakly maximal} integral lattice-free polytopes.
	\item The family $\cM^d$ of integral lattice-free polytopes $P$ in $\R^d$ such that there exists no \underline{lattice-free set} properly containing $P$. We call the elements of $\cL^d$ \emph{strongly maximal} integral lattice-free polytopes. 
\end{itemize}

The family $\cL^d$ has applications in mixed-integer optimization, algebra and algebraic geometry; see \cite{MR3500323,AKW17}, \cite{blanco2016finiteness} and \cite{Treutlein10}, respectively. In \cite{AWW11,NZ11} it was shown that $\cL^d$ is finite up to affine unimodular transformations:
\begin{theorem} \header{\cite[Theorem~2.1]{AWW11}, \cite[Corollary~1.3]{NZ11}} \label{finiteness:thm} $\cL^d / \Aff(\Z^d)$ is finite.
\end{theorem}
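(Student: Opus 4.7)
The plan is to reduce the finiteness statement to a uniform bound on the number of lattice points: I will show that there exists a function $N : \N \to \N$ with $\card{P \cap \Z^d} \le N(d)$ for every $P \in \cL^d$. Combined with the well-known fact that $d$-dimensional integral polytopes with at most $N$ lattice points fall into finitely many equivalence classes under $\Aff(\Z^d)$ (an elementary consequence of placing one vertex at the origin and bringing a basis of $d$ other lattice points into Hermite normal form, which leaves only boundedly many options for the remaining vertices), this implies the theorem.

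I would prove the bound $\card{P \cap \Z^d} \le N(d)$ by induction on $d$. The base case $d = 1$ is immediate since every $P \in \cL^1$ is a translate of $[0,1]$. For $d \ge 2$ the main tool is the Flatness Theorem of Khinchin, which supplies a constant $\omega(d)$ such that every $d$-dimensional lattice-free convex body in $\R^d$ has lattice width at most $\omega(d)$. Applying it to $P \in \cL^d$ and composing with a suitable element of $\Aff(\Z^d)$, I may assume
\[
P \subseteq \R^{d-1} \times [0, h], \qquad h \in \{1, \ldots, \lfloor \omega(d) \rfloor\},
\]
with both hyperplanes $x_d = 0$ and $x_d = h$ supporting $P$. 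All integer points of $P$ then lie in the $h+1$ horizontal slices $P_k := P \cap (\R^{d-1} \times \{k\})$, so it suffices to bound $\card{P_k \cap \Z^d}$ for $k \in \{0, \ldots, h\}$.

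For an intermediate layer $0 < k < h$ a short convexity argument shows that $\relintr P_k \subseteq \intr P$; hence, viewed in $\R^{d-1}$, the slice $P_k$ is lattice-free, so its integer hull $\conv(P_k \cap \Z^d)$ is an integral lattice-free polytope contained in some element of $\cL^{d-1}$ and, by the induction hypothesis, has at most $N(d-1)$ lattice points. The delicate case, which I expect to be the main obstacle, is that of the extreme slices $P_0$ and $P_h$: their relative interiors lie on the boundary of $P$, so they need not be lattice-free in $\R^{d-1}$. Here I would exploit the weak maximality of $P$ itself: if the integer hull of $P_0$ were properly contained in some $Q_0 \in \cL^{d-1}$, I would attempt to enlarge $P$ to $\conv(P \cup (Q_0' \times \{0\}))$ for a carefully chosen face $Q_0' \subseteq Q_0$, arrange that the enlargement remains lattice-free in $\R^d$, and contradict $P \in \cL^d$. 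The careful version of this extension argument, which must take into account how widening the bottom face creates new interior points at intermediate heights, forces the integer hull of each extreme slice to be weakly maximal in $\cL^{d-1}$, so the induction again bounds $\card{P_0 \cap \Z^d}$ and $\card{P_h \cap \Z^d}$ by $N(d-1)$. Summing across all $h+1$ slices yields $\card{P \cap \Z^d} \le (\omega(d)+1)\, N(d-1)$, closing the induction.
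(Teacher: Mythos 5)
There is a genuine gap, and it sits at the very foundation of your reduction. The claim that $d$-dimensional integral polytopes with at most $N$ lattice points fall into finitely many classes under $\Aff(\Z^d)$ is false. The Reeve simplices $T_k = \conv\{o, e_1, e_2, e_1+e_2+k e_3\} \subseteq \R^3$, $k \in \N$, each contain exactly four lattice points (their vertices), yet have volume $k/6$ and are therefore pairwise non-equivalent under affine unimodular transformations; they are even lattice-free. Your Hermite-normal-form argument breaks precisely here: the entries of the HNF of a sublattice spanned by $d$ lattice points of $P$ are bounded by the index (determinant) of that sublattice, not by the number of lattice points of $P$, and that determinant is exactly what the Reeve examples make arbitrarily large. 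The finiteness criterion that actually works (and is what the cited proofs in \cite{AWW11} and \cite{NZ11} rely on, via Hensley/Lagarias--Ziegler) requires a bound on the \emph{volume} of the polytopes in $\cL^d$, not on their number of lattice points; the paper itself alludes to this when it mentions the volume bounds for $\cL^d$ provided in \cite{NZ11}. So even if your inductive lattice-point bound were established, the theorem would not follow by the route you describe.

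The induction itself also has an unjustified step. For an intermediate slice $P_k$ you assert that its integer hull, being an integral lattice-free polytope in $\R^{d-1}$, is ``contained in some element of $\cL^{d-1}$.'' That is not automatic and is false for general integral lattice-free polytopes: for example, $[0,N]\times[0,1] \subseteq \R^2$ is integral and lattice-free but is contained in no element of $\cL^2$, since any integral lattice-free polytope containing it can always be extended by a further lattice point inside the slab $\R \times [0,1]$, so the ascending chain never terminates in a weakly maximal polytope. Thus the induction hypothesis (which only speaks about members of $\cL^{d-1}$) cannot be applied to the slices without a substantive additional argument showing that slices of a \emph{weakly maximal} $P$ are suitably bounded --- and that is essentially the hard core of the theorem, which in \cite{NZ11} is handled by a projection/volume dichotomy rather than by slicing. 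The extreme-slice step ($P_0$, $P_h$) is likewise only a sketch of an intended extension argument, not a proof. In short: the overall strategy (flatness plus induction on dimension) points in a reasonable direction, but the reduction to a lattice-point count is unsound, and the key containment used in the inductive step is unproved and false in the generality in which you invoke it.
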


Several groups of researchers are interested in enumeration of $\cL^d$, up to affine unimodular transformations, in fixed dimensions. This requires understanding geometric properties of $\cL^d$. Currently, no explicit description of $\cL^d$ is available for dimensions $d \ge 4$ and, moreover, it is even extremely hard to decide if a given polytope belongs to $\cL^d$. A brute-force algorithm based on volume bounds for $\cL^d$ (provided in \cite{NZ11}) would have doubly exponential running time in $d$. In contrast to $\cL^d$, its subfamily $\cM^d$ is easier to deal with. Lov\'asz's characterization \cite[Proposition~3.3]{MR1114315} of maximal lattice-free sets leads to a straightforward geometric description of polytopes belong to $\cM^d$. This characterization can be used to decide whether a given polytope is an element of $\cM^d$ in only exponential time in $d$. Thus, while enumeration of $\cM^d$ in fixed dimensions is a hard task, too, enumeration of $\cL^d$ is even more challenging. 

For a given dimension $d$, it is a priori not clear whether or not $\cM^d$ is a proper subset of $\cL^d$. Recently, it has been shown that the inequality $\cM^d = \cL^d$ holds if and only if $d \le 3$. The equality $\cM^d = \cL^d$ is rather obvious for $d  \in \{1,2\}$, as it is not hard to enumerate $\cL^d$ in these very small dimensions and to check that every element of $\cL^d$ belongs to $\cM^d$. Starting from dimension three, the problem gets very difficult. Results in \cite{AWW11} and \cite{AKW17} establish the equality $\cM^3 = \cL^3$ and enumerate $\cL^3$, up to affine unimodular transformations. As a complement, in \cite[Theorem~1.4]{NZ11} it was shown that for all $d \ge 4$ there exists a polytope belonging to $\cL^d$ but not to $\cM^d$. 

While Theorem~1.4 in \cite{NZ11} shows that $\cL^d$ and $\cM^d$ are two different families, it does not provide information on the number of polytopes in $\cL^d$ that do not belong to $\cM^d$. Relying on a result of Konyagin \cite{Konyagin14}, we will show that, asymptotically, the gap between $\cL^d$ and $\cM^d$ is very large. 

For $a_1,\ldots,a_d >0$, we introduce
\begin{equation} \label{kappa:def}
\kappa(a):=\kappa(a_1,\ldots,a_d) = \frac{1}{a_1} + \cdots + \frac{1}{a_d}.
\end{equation}
Reciprocals of positive integers are sometimes called Egyptian fractions. Thus, if $a \in \N^d$, then $\kappa(a)$ is a sum of $d$ Egyptian fractions. 
We consider the set 
\begin{equation} \label{cA:def}
\cA_d := \setcond{(a_1,\ldots,a_d) \in \N^d}{a_1 \le \ldots \le a_d, \ \kappa(a_1,\ldots,a_d)=1}
\end{equation}
of all different solutions of the Diophantine equation 
\[
\kappa(x_1,\ldots,x_d)=1
\] in the unknowns $x_1,\ldots,x_d \in \N$. The set $\cA_d$ represents possible ways to write $1$ as a sum of $d$ Egyptian fractions. It is known that $\cA_d$ is finite.  Our main result allows is a lower bound on the cardinality of $(\cL^d\setminus \cM^d) / \Aff(\Z^d)$:

\begin{theorem} \label{main}
	\(
	\cardbig{(\cL^{d+5}\setminus \cM^{d+5}) / \Aff(\Z^{d+5})} \ge \cardbig{\cA_d}.
	\)
\end{theorem}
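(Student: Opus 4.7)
The plan is, for each Egyptian-fraction tuple $a = (a_1, \ldots, a_d) \in \cA_d$, to construct an integral polytope $P_a \subset \R^{d+5}$ belonging to $\cL^{d+5} \setminus \cM^{d+5}$, and to show that the assignment $a \mapsto [P_a]$ (equivalence class under $\Aff(\Z^{d+5})$) is injective. Since $|\cA_d|$ is the quantity on the right-hand side, this immediately yields the lower bound.

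The natural starting object is the $d$-dimensional simplex $\Delta_a := \conv(0, a_1 e_1, \ldots, a_d e_d)$. Because $\kappa(a)=1$, the ``diagonal'' facet of $\Delta_a$ is supported by the hyperplane $\sum x_i/a_i = 1$ and passes through the integer point $\onevec = (1,\ldots,1)$; this forces $\Delta_a$ to be lattice-free, as any interior integer point would have all coordinates $\geq 1$ and hence $\sum x_i/a_i \geq \kappa(a) = 1$, contradicting the strict inequality required in the interior. I would then build $P_a \subset \R^{d+5}$ as a carefully chosen lift of $\Delta_a \times \{0\}$ using the five extra coordinates---a lattice-prism or pyramidal suspension in the spirit of the construction from \cite{NZ11}---engineered so that $P_a$ is integral and lattice-free, exactly one facet of $P_a$ (the one inherited from the diagonal facet of $\Delta_a$) fails to contain an integer point in its relative interior, and no integer point outside $P_a$ can be adjoined without creating an interior lattice point.

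The principal difficulty is verifying weak maximality, i.e., $P_a \in \cL^{d+5}$: for every $v \in \Z^{d+5} \setminus P_a$ one must exhibit an interior lattice point of $\conv(P_a \cup \{v\})$. I would handle this by a case analysis sorted by which facet of $P_a$ the point $v$ lies beyond. Across the non-diagonal facets, the five auxiliary dimensions should supply enough vertices (combinable from the vertices $a_i e_i$, from $\onevec$, and from the five new vertices) to produce a lattice point that ends up strictly inside the enlargement; across the diagonal facet, the fact that $\onevec$ sits in its relative interior ensures that $\onevec$ is pulled into the interior of $\conv(P_a \cup \{v\})$. Strong non-maximality is then the easy direction: by Lov\'asz's characterization of maximal lattice-free convex sets, the absence of an interior lattice point on the distinguished facet implies that shifting this facet outward by a small amount produces a (necessarily non-integral) lattice-free superset of $P_a$, witnessing $P_a \notin \cM^{d+5}$.

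Finally, to prove injectivity of $a \mapsto [P_a]$, I would extract an $\Aff(\Z^{d+5})$-invariant of $P_a$ recovering the multiset $\{a_1, \ldots, a_d\}$. The edges of $P_a$ descending from $\conv(0, a_i e_i)$ have lattice length $a_i$, and lattice edge lengths are preserved under affine unimodular transformations; together with the canonical ordering $a_1 \leq \ldots \leq a_d$ built into the definition of $\cA_d$, this reads off $a$ uniquely from the equivalence class $[P_a]$. The main obstacle I anticipate is choosing the right lift $P_a$: the five extra coordinates must be arranged so that every possible direction of integer extension across every non-diagonal facet is blocked, while the integral structure and the single Lov\'asz-defect on the diagonal facet are preserved; the specific role of the number five in the dimension shift is expected to emerge from this blocking argument.
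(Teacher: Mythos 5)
The overall strategy you outline (for each $a \in \cA_d$ construct $P_a \in \cL^{d+5}\setminus\cM^{d+5}$ and show $a \mapsto [P_a]$ is injective modulo $\Aff(\Z^{d+5})$) is indeed the strategy of the paper, but your proposal stops exactly where the actual mathematical work begins: the polytope $P_a$ is never constructed. You describe a wish list of properties for a ``carefully chosen lift'' of $T(a)\times\{0\}$ --- integrality, lattice-freeness, exactly one unblocked facet, weak maximality --- and you explicitly defer the two decisive points (how the five extra coordinates block every integral extension, and why the shift is by five) to a construction ``expected to emerge.'' That construction is the theorem. In the paper it is completely explicit: one stays inside the class of axis-aligned simplices and uses the Egyptian-fraction identities $\frac1t=\frac1{t+1}+\frac1{t(t+1)}$, $\frac1t=\frac1{t+2}+\frac1{t(t+2)}+\frac1{t(t+2)}$, $\frac1t=\frac2{3t}+\frac1{3t}$ to pass from $a$ to a vector $\eta(a)\in\R^{d+5}$ with $\kappa(\eta(a))=1$ whose last component is $\tfrac32\cdot(\text{odd})$, hence non-integral ($+1$, $+3$, $+1$ coordinates explains the five). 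Then $T(\eta(a))$ is a maximal lattice-free, \emph{non-integral} simplex which is shown facet-by-facet to be strongly blocked, and $P_a := \II{T(\eta(a))}$; weak maximality follows from the general principle (Proposition~\ref{sufficient:to:be:in:M:Z}) that the integer hull of a strongly blocked lattice-free polytope lies in $\cL^{d}$, and $P_a\notin\cM^{d+5}$ simply because $P_a \varsubsetneq T(\eta(a))$. Note also an internal tension in your sketch: you declare the facet inherited from the diagonal facet of $T(a)$ to be the one with no relatively interior lattice point, yet your maximality argument across that facet invokes $\onevec$ sitting in its relative interior; both cannot hold for the same facet, and without a concrete $P_a$ this cannot be repaired or even checked.

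Your injectivity argument is also weaker than what is needed. Recovering the multiset $\{a_1,\dots,a_d\}$ from ``lattice lengths of the edges descending from $\conv(o,a_ie_i)$'' presupposes that these edges can be singled out $\Aff(\Z^{d+5})$-invariantly among all edges of $P_a$; the auxiliary coordinates necessarily create further edges (in the paper's construction, of lattice lengths such as $a_d+1$, $a_d(a_d+1)$, etc.), and nothing in your sketch rules out collisions between the two kinds of data for different $a,b\in\cA_d$. The paper instead proves two clean lemmas: a strongly blocked lattice-free polytope is determined by its integer hull (Lemma~\ref{int:hull:strongly:blocked}), so $P_a\cong P_b$ forces $T(\eta(a))\cong T(\eta(b))$; and unimodularly equivalent axis-aligned simplices have the same defining vector up to permutation (Lemma~\ref{equiv:axis-aligned}, by induction and a volume argument), whence $\eta(a)=\eta(b)$ and $a=b$. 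To turn your proposal into a proof you would need to supply the explicit construction, the facet-by-facet blocking verification, and an invariant-based identification argument of comparable rigor.
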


The proof of Theorem~\ref{main} is constructive. This means that, for every $a \in \cA_d$, we generate an element in $P_a \in \cL^{d+5} \setminus \cM^{d+5}$ such that for two different elements $a$ and $b$ of $\cA_d$, the respective polytopes $P_a$ and $P_b$ do not coincide up to affine unimodular transformations. The proof of Theorem~\ref{main} is inspired by the construction in \cite{NZ11}. Using lower bounds on $\cardbig{\cA_d}$ from \cite{Konyagin14}, we obtain the following asymptotic estimate:

\begin{corollary} \label{lower:bound:for:difference}
	\(
	\ln \ln \cardbig{\bigl( \cL^d \setminus \cM^d \bigr) / \Aff(\Z^d)} = \Omega \left( \frac{d}{\ln d} \right) \),
	as $d \to \infty$. 
\end{corollary}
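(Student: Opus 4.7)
The plan is to combine Theorem~\ref{main} with Konyagin's lower bound \cite{Konyagin14} on the number of representations of $1$ as a sum of $d$ Egyptian fractions, and then pass to double logarithms. Konyagin's theorem asserts (in the form we need) that $\ln \ln \cardbig{\cA_d} \ge (c+o(1))\,\frac{d}{\ln d}$ as $d \to \infty$ for some absolute constant $c > 0$. Everything else is routine asymptotic manipulation.

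Concretely, I would proceed as follows. First, for every $n \ge 6$, set $d := n - 5$ and apply Theorem~\ref{main} to obtain
\[
\cardbig{(\cL^{n}\setminus \cM^{n}) / \Aff(\Z^{n})} \;\ge\; \cardbig{\cA_{n-5}}.
\]
Second, take the double logarithm on both sides and insert Konyagin's bound on $\cardbig{\cA_{n-5}}$:
\[
\ln\ln\cardbig{(\cL^{n}\setminus \cM^{n}) / \Aff(\Z^{n})} \;\ge\; \ln\ln \cardbig{\cA_{n-5}} \;\ge\; (c+o(1))\,\frac{n-5}{\ln(n-5)}.
\]
Third, use the elementary asymptotic $\frac{n-5}{\ln(n-5)} \sim \frac{n}{\ln n}$ as $n \to \infty$ to absorb the constant shift in the dimension into the $\Omega(\cdot)$ notation, which yields the claimed bound $\Omega(n/\ln n)$ upon renaming $n$ to $d$.

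There is no real technical obstacle here; the entire argument is a two-line composition of Theorem~\ref{main} with an external number-theoretic input. The only point deserving a brief remark is that the additive shift by $5$ in the dimension is asymptotically invisible after taking $\ln\ln$, since $(n-5)/\ln(n-5)$ and $n/\ln n$ differ only by a factor tending to $1$. If desired, one can state the corollary uniformly for all $d \ge 6$ rather than asymptotically, by tracking Konyagin's constant explicitly, but for the stated $\Omega$-bound no such bookkeeping is necessary.
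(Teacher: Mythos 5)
Your argument is correct and is essentially the paper's own proof: the corollary is deduced by combining Theorem~\ref{main} with Konyagin's estimate $\ln\ln\cardbig{\cA_d}=\Omega(d/\ln d)$, the shift of the dimension by $5$ being absorbed into the asymptotics. Your extra remark spelling out why $(n-5)/\ln(n-5)\sim n/\ln n$ is fine but adds nothing beyond what the paper leaves implicit.
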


\paragraph*{Notation.}
We view the elements of $\R^d$ as columns. By $o$ we denote the zero vector and by $e_1,\ldots,e_d$ the standard basis of $\R^d$. If $x \in \R^d$ and $i \in \{1,\dots,d\}$, then $x_i$ denotes the $i$-th component of $x$. The relation $a \le b$ for $a, b \in\R^d$ means $a_i \le b_i$ for every $i \in \{1,\ldots,d\}$. The relations $\ge, >$ and $<$ on $\R^d$ are introduced analogously. The abbreviations $\aff, \conv, \intr$ and $\relintr$ stand for the affine hull, convex hull, interior and relative interior, respectively. 


\section{An approach to construction of polytopes in $\cL^d \setminus \cM^d$} 

We will present a systematic approach to construction of polytopes in $\cL^d \setminus \cM^d$, but first we discuss general maximal lattice-free sets. 

\begin{definition}
	Let $P$ be a lattice-free polyhedron in $\R^d$. We say that a facet $F$ of $P$ is \emph{blocked} if the relative interior of $F$ contains an integral point.
\end{definition}

Maximal lattice-free sets can be characterized as follows:
\begin{proposition} \header{\cite[Proposition~3.3]{MR1114315}.} 
	\label{lovasz}
	Let $K$ be a $d$-dimensional closed convex subset of $\R^d$. Then the following conditions are equivalent.
	\begin{enumerate}[(i)]
		\item $K$ is maximal lattice-free, 
		\item $K$ is a lattice-free polyhedron such that every facet of $K$ is blocked.
	\end{enumerate}
\end{proposition}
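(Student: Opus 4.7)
The plan is to prove the two implications (ii)$\Rightarrow$(i) and (i)$\Rightarrow$(ii) separately; the first is a clean geometric argument, while the second carries the real work.

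For (ii)$\Rightarrow$(i): assume $K$ is a lattice-free polyhedron with every facet blocked and suppose, toward contradiction, that some lattice-free closed convex set $K'$ properly contains $K$. Pick $x \in K' \setminus K$. Since $K$ is a polyhedron and $x \notin K$, some facet $F$ of $K$ is separated from $x$: the affine hull $H = \aff(F)$ supports $K$ on one side while $x$ lies in the open opposite half-space. Let $z \in \relintr(F) \cap \Z^d$, which exists by hypothesis. Then $F$ contains a small $(d-1)$-dimensional ball around $z$ inside $H$, and the segment from $z$ to $x$ lies in $K'$ and leaves $H$ strictly on $x$'s side. Taking convex combinations of points of this ball with points on that segment yields a full-dimensional neighborhood of $z$ inside $K'$, so $z \in \intr(K')$, contradicting the lattice-freeness of $K'$.

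For (i)$\Rightarrow$(ii) I would proceed in two stages. Stage 1: show that $K$ is a polyhedron. This is the main obstacle. A standard route is to first analyze the recession cone of $K$: maximality forces it to be a rational polyhedral cone, and its lineality space must be spanned by integer vectors, for otherwise $K$ could be enlarged along an irrational recession direction while remaining lattice-free. After quotienting out the lineality space one reduces to the line-free case and then, by a lattice covering argument in the spirit of Dirichlet's or Minkowski's theorem, one bounds the number of supporting half-spaces needed to describe $K$, forcing $K$ to be a polyhedron with finitely many facets. Stage 2: once $K$ is known to be a polyhedron, show every facet is blocked. If some facet $F$ of $K$ had no integer point in $\relintr(F)$, then, using that only finitely many lattice points lie within any bounded neighborhood of $\relbd(K)$ and that $F$ is relatively open in its supporting hyperplane, one could translate that hyperplane by a small outward amount to obtain a strictly larger closed convex set $K' \supsetneq K$ whose interior still misses $\Z^d$, contradicting maximality.

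The hard part is Stage 1: proving that a maximal lattice-free convex set is automatically polyhedral. Pure convex-geometric arguments do not suffice here — one has to exploit the arithmetic structure of $\Z^d$, typically through some form of simultaneous Diophantine approximation, to rule out the possibility of $K$ having infinitely many exposed faces.
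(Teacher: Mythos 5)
The paper itself offers no proof of this proposition --- it is imported verbatim from Lov\'asz --- so your argument has to stand on its own. Your (ii)$\Rightarrow$(i) direction is essentially the standard argument and is sound up to one small slip: the convex hull of the ball $B \subseteq F$ and the segment from $z$ to $x$ is a cone with base $B$, and $z$ lies on the \emph{boundary} of that cone, not in its interior, so this alone does not give $z \in \intr(K')$. To finish you must also use $K$ itself: since $z \in \relintr F$ and $K$ is $d$-dimensional with finitely many facets, $K$ contains a half-ball around $z$ on the inner side of $H=\aff F$, and this half-ball together with the cone toward $x$ covers a full neighborhood of $z$ inside $\conv(K \cup \{x\}) \subseteq K'$. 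That is an easy repair.

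The genuine gap is in (i)$\Rightarrow$(ii). Stage~1 --- that a full-dimensional maximal lattice-free convex set is automatically a polyhedron --- \emph{is} the substance of Lov\'asz's theorem, and you do not prove it: phrases like ``a lattice covering argument in the spirit of Dirichlet's or Minkowski's theorem'' and ``some form of simultaneous Diophantine approximation'' name plausible tools but contain no argument, as you concede yourself; nothing is shown about why the recession cone is rational polyhedral, why the lineality space is a lattice subspace, or why only finitely many facets can occur. Without this, the hard implication is simply asserted. Stage~2 is also incomplete for unbounded $K$: ``translate the facet hyperplane outward by a small amount'' needs a single $\varepsilon>0$ that works globally, but when $K$ is unbounded the lattice points outside $K$ relevant to the pushed facet form an unbounded set, and their distances to $\aff F$ need not be bounded away from zero, so no positive displacement may exist; your observation that bounded neighborhoods of the boundary contain finitely many lattice points does not address this. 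Handling it again requires the recession-cone and Diophantine analysis that the complete published proofs (Lov\'asz; Basu--Conforti--Cornu\'ejols--Zambelli; Averkov) carry out --- for bounded $K$ your Stage~2 is fine, but the proposition concerns arbitrary $d$-dimensional closed convex sets. As it stands, the proposal proves only the easy direction.
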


It can happen that some facets of a maximal lattice-free polyhedron are more than just blocked. We introduce a respective notion. Recall that the \term{integer hull} $\II{K}$ of a compact convex set $K$ in $\R^d$ is defined by 
\[
\II{K}:= \conv( K \cap \Z^d).
\]

\begin{definition}
	Let $P$ be a $d$-dimensional lattice-free polyhedron in $\R^d$. A facet $F$ of $P$ is called \term{strongly blocked} if $\II{F}$ is $(d-1)$-dimensional and $\Z^d \cap \relintr \II{F} \ne \emptyset$. The polyhedron $P$ is called \term{strongly blocked} if all facets of $P$ are strongly blocked.
\end{definition}

The following proposition extracts the geometric principle behind the construction from \cite[Section~3]{NZ11}. (Note that arguments in \cite[Section~3]{NZ11} use an algebraic language.) 

\begin{proposition} \label{sufficient:to:be:in:M:Z}
	Let $P$ be a strongly blocked lattice-free polytope in $\R^d$. Then $\II{P} \in \cL^d$. Furthermore, if $\II{P}$ is not integral, then $\II{P} \not\in \cM^d$. 
\end{proposition}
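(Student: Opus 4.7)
The plan is to prove the two assertions separately. I read the literal hypothesis ``$\II{P}$ is not integral'' as a typo for ``$P$ is not integral'' (since $\II{P}=\conv(P\cap\Z^d)$ is automatically integral in the paper's sense, the stated condition would be vacuous otherwise). The second assertion is then immediate: when $P$ is not integral, $\II{P}\subsetneq P$, and $P$ itself is lattice-free by hypothesis, so $\II{P}$ is properly contained in a lattice-free set and hence not maximal lattice-free, giving $\II{P}\notin\cM^d$.

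For the first assertion, $\II{P}$ is integral by construction and lattice-free since $\II{P}\subseteq P$; it is also $d$-dimensional, because for each facet $F$ of $P$ strong blocking gives that $\II{F}\subseteq\II{P}$ has dimension $d-1$ and lies in the facet hyperplane $H_F$, and the facet hyperplanes of $P$ affinely span $\R^d$. To establish weak maximality I argue by contradiction: suppose $Q$ is an integral lattice-free polytope with $\II{P}\subsetneq Q$. Integrality of $Q$ forces some $w\in Q\cap\Z^d\setminus\II{P}$; since $P$ is lattice-free one has $P\cap\Z^d=\II{P}\cap\Z^d$, so $w\notin P$, and hence some facet hyperplane $H_F$ of $P$ strictly separates $w$ from $\intr P$. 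Choose $v_F\in\relintr\II{F}\cap\Z^d$, which exists by strong blocking. I claim $v_F\in\intr Q$, contradicting lattice-freeness of $Q$.

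The local picture at $v_F$ is as follows: the relative-interior condition gives a small $(d-1)$-dimensional open disk $D\subseteq\II{F}\subseteq Q$ inside $H_F$ around $v_F$; on the $P$-side of $H_F$, the $d$-dimensional set $\II{P}\subseteq Q$ supplies interior points arbitrarily close to $v_F$, while on the $w$-side, the open cone from $w\in Q$ over $D$ lies in $\conv(\II{P}\cup\{w\})\subseteq Q$ and accumulates at $v_F$ from that side. Pasting the two sides yields a full open $\R^d$-neighborhood of $v_F$ in $Q$, which is the desired contradiction. The main obstacle is precisely this cone construction, and it is where strong blocking (rather than mere blocking) is essential: one needs the $(d-1)$-dimensional wiggle room $D$ within $H_F$ in order to swing it into the $w$-side via the apex $w$, whereas plain blocking would supply only a single integer point in $\relintr F$ and no disk to cone off.
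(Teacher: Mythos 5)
Your proof is correct and takes essentially the same route as the paper: there, too, one supposes an integral point $z$ lies outside $\II{P}$ while $\conv(\II{P}\cup\{z\})$ is lattice-free, picks a facet $F$ of $P$ whose hyperplane separates $z$ from $P$, and observes that the integral point guaranteed by strong blocking in $\relintr \II{F}$ then falls into the interior of the enlarged set, a contradiction; the second claim is handled exactly as you do (the paper's own proof indeed reads the hypothesis as ``$P$ is not integral''). Your cone-and-paste discussion merely spells out the interiority claim that the paper leaves implicit.
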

\begin{proof}
	In order to show $\II{P} \in \cL^d$ it suffices to verify that, for 
	every $z \in \Z^d$ such that $\conv (\II{P} \cup \{z\})$ is lattice-free, one necessarily has $z \in \II{P}$. If $z \not\in \II{P}$, then $z \not\in P$ and so, for some facet $F$ of $P$, the point $z$ and the polytope $P$ lie on different sides of the hyperplane $\aff F$. Then $\emptyset \ne \Z^d \cap \relintr \II{F} \subseteq \intr( \conv (P \cup \{z\}))$, yielding a contradiction to the choice of $z$. Thus, for every facet $F$ of $P$, $z$ and $P$ lie on the same side of $\aff F$. It follows $z \in P$. Hence $z \in P \cap \Z^d \subseteq \II{P}$. 
	
	If $P$ is not integral, then $\II{P} \not\in \cM^d$ since $\II{P} \varsubsetneq P$ and $P$ is lattice-free. 
\end{proof}

\section{Lattice-free axis-aligned simplices}

For $a \in \R_{>0}^d$, the $d$-dimensional simplex
\[
T(a):= \conv \{ o, a_1 e_1, \ldots, a_d e_d \}.
\]
is called \term{axis-aligned}. The proof of the following proposition is straightforward.
\begin{proposition} \label{coordinate:simplices}
	For $a \in \R_{>0}^d$, the following statements hold. 
	\begin{enumerate}[I.]
		\item The simplex $T(a)$ is a  lattice-free set if and only if $\kappa(a) \ge 1$.
		\item The simplex $T(a)$ is a maximal lattice-free set if and only if $\kappa(a)=1$.
	\end{enumerate}
\end{proposition}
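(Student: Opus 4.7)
The plan is to work from the halfspace description
\[
T(a) = \setcondbig{x \in \R^d}{x \ge o \textand \sum_{i=1}^d x_i/a_i \le 1},
\]
from which both the interior of $T(a)$ and its facets are completely explicit: $T(a)$ has the $d$ coordinate facets $F_i := T(a) \cap \{x_i = 0\}$ for $i = 1,\ldots,d$ and the slanted facet $F_0 := T(a) \cap \{\sum_j x_j/a_j = 1\}$. Both assertions of the proposition then reduce to testing whether the vector $\onevec = (1,\ldots,1)$, or a coordinate projection of it, lies in the interior or the relative interior of the appropriate face.

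For Part~I, I would use $\onevec$ as the test vector in both directions. If $\kappa(a) < 1$, then $\onevec$ has strictly positive coordinates and $\sum_{i=1}^d 1/a_i = \kappa(a) < 1$, so $\onevec \in \Z^d \cap \intr T(a)$ and $T(a)$ fails to be lattice-free. Conversely, any $z \in \Z^d \cap \intr T(a)$ satisfies $z_i \ge 1$ for each $i$, hence $\kappa(a) = \sum 1/a_i \le \sum z_i/a_i < 1$, so lattice-freeness forces $\kappa(a) \ge 1$.

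For Part~II, I would invoke Lovász's characterization (Proposition~\ref{lovasz}) to reduce maximal lattice-freeness to checking that $T(a)$ is lattice-free and that each of its $d+1$ facets is blocked. Assuming $\kappa(a) = 1$, lattice-freeness follows from Part~I and the blocking integer points are exhibited directly: $\onevec$ lies in $\relintr F_0$ because $\sum 1/a_j = 1$ and all its coordinates are strictly positive, while $\onevec - e_i$ lies in $\relintr F_i$ because its $i$-th coordinate vanishes, the remaining coordinates are strictly positive, and $\sum_{j \ne i} 1/a_j = 1 - 1/a_i < 1$. For the converse, if $T(a)$ is maximal lattice-free then Part~I gives $\kappa(a) \ge 1$; strict inequality $\kappa(a) > 1$ is ruled out because any integer $x \in \relintr F_0$ would satisfy $x_j \ge 1$ together with $\sum x_j/a_j = 1$, yielding the contradiction $1 = \sum x_j/a_j \ge \kappa(a) > 1$, so $F_0$ could not be blocked.

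The argument is purely arithmetic and I anticipate no substantive obstacle; the only point requiring care is verifying that the witness integer points lie in the \emph{relative} interior of their respective facets rather than on lower-dimensional faces, which is ensured by strict positivity of the surviving coordinates together with the strict inequality $1 - 1/a_i < 1$ arising on the coordinate facets.
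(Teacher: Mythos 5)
Your proof is correct. The paper itself omits the argument, declaring it straightforward, and your route --- the halfspace description of $T(a)$, the test points $\onevec$ and $\onevec - e_i$, and Lov\'asz's characterization (Proposition~\ref{lovasz}) for Part~II --- is exactly the intended one; indeed the same kind of facet-blocking computation (``$\kappa(\cdots) < 1$ implies the all-ones point lies in the relative interior'') is what the paper uses explicitly in the proof of Lemma~\ref{replacing:components}.
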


We introduce transformations which preserve the values of $\kappa$. The transformations arise from the following trivial identities for $t>0$:
\begin{align}
	\frac{1}{t} & = \frac{1}{t+1} + \frac{1}{t(t+1)}, \label{triv:1} \\
	\frac{1}{t} & = \frac{1}{t+2} + \frac{1}{t(t+2)} + \frac{1}{t(t+2)}, \label{triv:2} \\
	\frac{1}{t} & = \frac{2}{3 t} + \frac{1}{3 t}. \label{triv:3}
\end{align}
Consider a vector $a \in \R_{>0}^d$. By \eqref{triv:1}, if $t$ is a component of $a$, we can replace this component with two new components $t+1$ and $t(t+1)$ to generate a vector $b \in \R_{>0}^{d+1}$ satisfying $\kappa(b)=\kappa(a)$. Identities \eqref{triv:2} and \eqref{triv:3} can be applied in a similar fashion. For every $d \in \N$, with the help of \eqref{triv:1}--\eqref{triv:3}, we introduce the following maps:
\begin{align}
	\phi_d &:\R_{>0}^d \rightarrow \R_{>0}^{d+1}, & 
	\phi_d(a) &:= \begin{pmatrix} a_1 \\ \vdots \\ a_{d-1} \\ a_d+1 \\ a_d(a_d+1) \end{pmatrix}, \label{phi:def} 
	\\
	\psi_d &: \R_{>0}^d \rightarrow \R_{>0}^{d+3}, & \psi_d(a)  &: = \begin{pmatrix} a_1 \\ \vdots \\ a_{d-1} \\ a_d+3 \\ a_d(a_d+1)
	\\ (a_d+1)(a_d+3) \\ (a_d+1)(a_d+3) \end{pmatrix} , \label{psi:def} \\
	\xi_d &: \R_{>0}^d \rightarrow \R_{>0}^{d+1} & 
	\xi_d(a)  &:= \begin{pmatrix} a_1 \\ \vdots \\ a_{d-1} \\ \frac{3}{2} a_d \\ 3a_d \end{pmatrix}. \label{xi:def}
\end{align}
The map $\phi_d$ replaces the component $a_d$ by two other components based on \eqref{triv:1}, while $\xi_d$ replaces $a_d$ based on \eqref{triv:3}. The map $\psi_d$ acts by replacing  the component $a_d$ based on \eqref{triv:1} and then replacing the component $a_d+1$ based on \eqref{triv:2}. Identities \eqref{triv:1}--\eqref{triv:3} imply 
\begin{equation} \label{kappa:preserving}
	\kappa(\phi_d(a)) = \kappa(\psi_d(a))=\kappa(\xi_d(a)))= \kappa(a).
\end{equation}

\begin{lemma} 
	\label{replacing:components}
	Let $P=T(\xi_d(a))$, where $a \in \cA_d$ and $d \ge 2$. Then
	$P$ is a strongly blocked lattice-free $(d+1)$-dimensional polytope. Furthermore, if $a_d$ is odd, $P$ is not integral. 
\end{lemma}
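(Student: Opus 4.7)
The plan is to establish, in turn, that (a)~$P$ is a $(d+1)$-dimensional lattice-free axis-aligned simplex, (b)~$P$ is strongly blocked, and (c)~$P$ fails to be integral when $a_d$ is odd. Parts~(a) and~(c) are immediate. For~(a), $\xi_d(a)\in\R_{>0}^{d+1}$ makes $P$ a $(d+1)$-dimensional axis-aligned simplex by definition, and $\kappa(\xi_d(a))=\kappa(a)=1$ by \eqref{kappa:preserving}, so Proposition~\ref{coordinate:simplices} gives that $P$ is in fact maximal lattice-free. For~(c), the component $\tfrac{3}{2}a_d$ of $\xi_d(a)$ is the only one that can fail to be an integer, and it does so precisely when $a_d$ is odd; in that case $\tfrac{3}{2}a_d e_d$ is a non-integer vertex of~$P$, so $P$ is not an integral polytope.

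The substantive work is part~(b). Label the vertices of $P$ as $v_0=o$, $v_j=a_j e_j$ for $1\le j\le d-1$, $v_d=\tfrac{3}{2}a_d e_d$, $v_{d+1}=3 a_d e_{d+1}$, and let $F_j$ denote the facet of $P$ opposite to $v_j$. For each of the $d+2$ facets, I would exhibit a specific integer point $z_j\in F_j$ together with a $d$-simplex $S_j\subseteq F_j$ having $d+1$ integer vertices, and verify that $z_j$ lies in the interior of $S_j$ relative to $\aff F_j$; since $S_j\subseteq\II{F_j}$, this shows at once that $\II{F_j}$ is $d$-dimensional and contains $z_j$ in its relative interior. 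The specific choices: for $F_j$ with $1\le j\le d-1$ take $z_j=\onevec-e_j$, and let $S_j$ be obtained from $F_j$ by replacing the (possibly non-integer) vertex $v_d$ with the integer point $a_d e_d$; for $F_d$ take $S_d=F_d$ (already integer) and $z_d=\onevec-e_d$; for $F_{d+1}$, the naive substitute $a_d e_d$ places $z_{d+1}=\onevec-e_{d+1}$ on the boundary (since $\sum_{j\le d-1}1/a_j+1/a_d=1$), so one substitutes $(a_d+1)e_d$ instead, which is in $F_{d+1}$ because $a_d+1\le\tfrac{3}{2}a_d$ for $a_d\ge 2$; for the outer facet $F_0$, take $z_0=\onevec$ and let $S_0$ be obtained from $F_0$ by replacing $v_d$ with the integer point $w=(a_d+1)e_d+(a_d-2)e_{d+1}$, which lies on the edge $\conv(v_d,v_{d+1})\subset F_0$ since $2(a_d+1)+(a_d-2)=3a_d$ and $a_d\ge 2$. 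In every case, affine independence of the vertices of $S_j$ is immediate because the substitute for $v_d$ has non-zero $e_d$-coordinate while the remaining vertices of $S_j$ lie in $\{x_d=0\}$; strict positivity of the barycentric coordinates of $z_j$ with respect to $S_j$ is a short arithmetic check using $\kappa(a)=1$ and $a_1\le\cdots\le a_d$.

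The main conceptual hurdle is $F_0$: the point $\onevec$ visibly lies in $\relintr F_0$, but when $a_d$ is odd the integer hull $\II{F_0}$ is strictly smaller than $F_0$, so $\onevec$ is not automatically in $\relintr\II{F_0}$. The choice of the integer substitute~$w$ above is the crux, and the Egyptian-fraction identity $\tfrac{1}{a_d}=\tfrac{1}{a_d+1}+\tfrac{1}{a_d(a_d+1)}$ --- exactly \eqref{triv:1} at $t=a_d$ --- is what forces the barycentric coordinates $1/a_1,\dots,1/a_{d-1},1/(a_d+1),1/(a_d(a_d+1))$ of $\onevec$ with respect to $S_0$ to be strictly positive and sum to~$1$.
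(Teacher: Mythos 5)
Your proof is correct and follows essentially the same strategy as the paper: maximal lattice-freeness via $\kappa(\xi_d(a))=\kappa(a)=1$ and Proposition~\ref{coordinate:simplices}, then a facet-by-facet verification of strong blockedness by inscribing an integral $d$-simplex in each facet and exhibiting an integral point in its relative interior, and non-integrality from the vertex $\tfrac{3}{2}a_d e_d$. The differences are only cosmetic: you treat both parities of $a_d$ uniformly (the paper dispatches the even case abstractly via Proposition~\ref{lovasz}) and pick slightly different integral witnesses, e.g.\ $(a_d+1)e_d+(a_d-2)e_{d+1}$ on the outer facet instead of the paper's $\tfrac{3a_d-1}{2}e_d+e_{d+1}$; all your choices check out (noting $a_d\ge d\ge 2$, which you use implicitly and which follows from $1=\kappa(a)\ge d/a_d$).
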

\begin{proof} 
	In this proof, we use the \term{all-ones vector} 
	\[
		\onevec_d := \begin{pmatrix} 1 \\ \vdots \\ 1 \end{pmatrix} \in \R^d.
	\]
	
	For the sake of brevity we introduce the notation $t:=a_d$. One has $1= \kappa(a) = \sum_{i=1}^d \frac{1}{a_i} \ge \sum_{i=1}^d \frac{1}{t} = \frac{d}{t}$, which implies $t \ge d \ge 2$.  By \eqref{kappa:preserving}, one has $\kappa(\xi_d(a))=1$ and so, by Proposition~\ref{coordinate:simplices}, $P$ is maximal lattice-free. 
	
	If $t$ is even, the polytope $P$ is integral and hence every facet of $P$. In view of Proposition~\ref{lovasz}, integral maximal lattice-free polytopes are strongly blocked, and so we conclude that $P$ is strongly blocked. 
	
	Assume that $t$ is odd, then the polytope $P$ has one non-integral vertex. In this case, we need to look at facets of $P$ more closely, to verify that $P$ is strongly blocked. We consider all facets of $P$.
	\begin{enumerate}
		\item The facet \(
		F=\conv \bigl\{o,a_1 e_1,\ldots,a_{d-1} e_{d-1}, 3 t e_{d+1}\bigr\}
		\)
		is a $d$-dimensional integral integral axis-aligned simplex. Since
		\[
		\kappa(a_1,\ldots,a_{d-1},3t) < 1,
		\]
		the integral point $e_1 + \cdots + e_{d-1} + e_{d+1}$ is in the relative interior of $F$. Hence, $F$ is strongly blocked. 
		\item 	The facet 
		\(
			F = \conv \Bigl\{o,a_1 e_1,\ldots,a_{d-1} e_{d-1}, \frac{3}{2} t e_d\Bigr\}
		\)
		contains the $d$-dimensional integral axis-aligned simplex
		\[
			G:=\conv \Bigl\{o, a_1 e_1,\ldots,a_{d-1} e_{d-1}, \frac{3t-1}{2} e_d\Bigr\},
		\]
		as a subset. In view of $t \ge 2$, we have 
		\[
		\kappa\Bigl(a_1,\ldots,a_{d-1},\frac{3t-1}{2}\Bigr) < 1,
		\]
		which implies that the integral point $e_1 + \dots + e_d$ is in the relative interior of $G$. It follows that $F$ is strongly blocked. 
		\item 	The facet
		\(
			F:= \conv \Bigl\{a_1e_1,\ldots,a_{d-1} e_{d-1}, \frac{3}{2} t e_d, 3 t e_{d+1} \Bigr\}
		\)
		contains the integral $d$-dimensional simplex 
		\[
			G: = \conv \Bigl\{a_1 e_1,\ldots,a_{d-1} e_{d-1}, \frac{3 t-1}{2} e_d + e_{d+1}, 3 t e_{d+1}\Bigr\}.
		\]
		as a subset. It turns out that $\onevec_{d+1}$ is the relative interior of $G$, because $\onevec_{d+1}$ is a convex combination of the vertices of $\relintr G$, with positive coefficients. Indeed, the equality
		\[
			\onevec_{d+1} = \sum_{i=1}^{d-1} \frac{1}{a_i} \bigl(a_i e_i\bigr) + \lambda \Bigl(\frac{3t-1}{2} e_d + e_{d+1}\Bigr) + \mu \bigl(3 t e_{d+1}\bigr)
		\]
		holds for $\lambda= \frac{2}{3t-1}$ and $\mu= \frac{t-1}{t(3t-1)}$, where 
		\[
			\sum_{i=1}^{d-1} \frac{1}{a_i} + \lambda + \mu = 1.
		\]
		\item It remains to consider faces $F$ with the vertex set $\left\{ o, a_1 e_1,\ldots,a_d e_d, \frac{3}{2} t e_d, 3 t e_{d+1} \right\} \setminus \{a_i e_i\}$, where $i \in \{1,\ldots,d+1\}$. Without loss of generality, let $i=1$ so that $F = \conv \left\{ o, a_2 e_2,\ldots, \frac{3}{2} t e_d, 3 t e_{d+1} \right\}$. This facet contains the integral $d$-dimensional simplex 
		\[
			G: = \conv \Bigl\{o, a_2 a_2,\ldots,a_{d-1} e_{d-1}, \frac{3 t-1}{2} e_d + e_{d+1}, 3 t e_{d+1}\Bigr\}.
		\]
		Similarly to the previous case, one can check that $e_2 + \cdots + e_{d+1}$ is an integral point in the relative interior of $G$. Consequently, $F$ is strongly blocked. 
	\end{enumerate}
\end{proof}

\section{Proof of the main result}

For $d \ge 4$, Nill and Ziegler \cite{MR1138580} construct one vector $a\in \R_{>0}^d$ with $\II{T(a)} \in \cL^d \setminus \cM^d$.  We generalize this construction and provide many further vectors $a$ with the above properties. We will also need to verify that for difference choices of $a$, we get essentially different polytopes $\II{T(a)}$.

\begin{lemma}
	\label{int:hull:strongly:blocked}
	Let $P$ and $Q$ be $d$-dimensional strongly blocked lattice-free polytopes such that for their integral hulls the equality $\II{Q}=A(\II{P})$ holds for some $A \in \Aff(\Z^d)$. Then $Q=A(P)$. 
\end{lemma}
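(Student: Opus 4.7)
The plan is first to reduce to the case $A=\mathrm{id}$ by replacing $Q$ with $A^{-1}(Q)$; since $A^{-1}\in\Aff(\Z^d)$, this transformed polytope is still strongly blocked lattice-free and has integer hull $A^{-1}(\II{Q})=\II{P}$. It thus suffices to show that whenever $P,Q$ are $d$-dimensional strongly blocked lattice-free polytopes with a common integer hull $R:=\II{P}=\II{Q}$, one has $P=Q$. (Note $R$ is $d$-dimensional because any two distinct facets of $P$ contribute $(d-1)$-dimensional integral polytopes lying in distinct hyperplanes.)

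My strategy is to read off the set of facet hyperplanes of $P$ (and of $Q$) directly from $R$, so that the two polytopes are determined by the same data. Concretely, I aim to show that the affine hulls of the facets of $P$ form exactly the set
\[
\mathcal{H} := \setcond{\aff G}{\text{$G$ is a facet of $R$ with $\Z^d\cap\relintr G\neq\emptyset$}}.
\]
One inclusion is direct: for a facet $F$ of $P$, the hyperplane $\aff F$ supports $R\subseteq P$, and the face $R\cap\aff F$ contains the same integer points as $F$, so it equals $\II{F}$; by strong blocking $\II{F}$ is $(d-1)$-dimensional with a relatively interior integer point, hence $\aff F\in\mathcal{H}$.

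The main obstacle is the reverse inclusion: given a facet $G$ of $R$ and $z\in\Z^d\cap\relintr G$, I must show $\aff G$ is the affine hull of some facet of $P$. My plan is to argue via tangent cones at $z$. Since $z\in\relintr G$, the tangent cone $T_z R$ is the closed half-space $\{v\in\R^d:\langle\nu,v\rangle\le 0\}$, where $\nu$ is the outer normal of $R$ at $G$. Because $P$ is lattice-free and $z\in R\subseteq P$, the point $z$ lies on $\bd P$, so $T_z P$ is a polyhedral convex cone strictly contained in $\R^d$ and containing $T_z R$. The key claim is that a convex cone containing a half-space is either that half-space or all of $\R^d$: if some $x\in T_z P$ satisfied $\langle\nu,x\rangle>0$, then for every $y\in\R^d$ one could choose $\lambda>0$ large enough that $y-\lambda x\in T_z R$, and since convex cones are closed under addition, $y=\lambda x+(y-\lambda x)$ would lie in $T_z P$, forcing $T_z P=\R^d$. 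Hence $T_z P=T_z R$, which means $z$ lies in the relative interior of a facet $F$ of $P$ with $\aff F=\aff G$.

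The same analysis applied to $Q$ yields that the facet hyperplanes of $Q$ also form exactly the set $\mathcal{H}$. Because $R$ is full-dimensional, each $H\in\mathcal{H}$ admits a unique closed half-space $H^+$ containing $R$, and this half-space must also contain $P$ (and $Q$). Consequently $P=\bigcap_{H\in\mathcal{H}}H^+=Q$, as desired.
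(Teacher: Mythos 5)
Your proof is correct and takes essentially the same route as the paper: reduce via $A(\II{P})=\II{A(P)}$ to showing that a strongly blocked lattice-free polytope is determined by its integer hull, and then identify the facet hyperplanes of $P$ with the affine hulls of those facets of $\II{P}$ that have an integral point in their relative interior. The only difference is that you justify the harder inclusion (which the paper merely asserts) with a tangent-cone argument; a lighter argument also works, since an integral point $z\in\relintr G$ lies on some facet $F$ of $P$ by lattice-freeness, and the supporting hyperplane $\aff F$ must then contain the $(d-1)$-dimensional set $G$, forcing $\aff F=\aff G$.
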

\begin{proof}
	Since $A$ is an affine transformation, we have \[A(P_I) = A(\conv (P \cap \Z^d)) = \conv A(P \cap \Z^d).\]
	Using $A \in \Aff(\Z^d)$, it is straightforward to check the equality $A(P \cap \Z^d) = A(P) \cap \Z^d$. We thus conclude that $A(\II{P}) = \II{A(P)}$. The assumption $\II{Q} = A(\II{P})$ yields $\II{Q} = \II{A(P)}$. Since $P$ is strongly blocked lattice-free, $A(P)$ too is strongly blocked lattice-free. We thus have the equality $\II{Q}=\II{A(P)}$ for  strongly blocked lattice-free polytopes $Q$ and $A(P)$. To verify the assertion, it suffices to show that a strongly blocked lattice-free polytope $Q$ is uniquely determined by the knowledge of its integer hull $\II{Q}$. This is quite easy to see. For every strongly blocked facet $G$ of $\II{Q}$, the affine hull of $G$ contains a facet of $Q$. Conversely, if $F$ is an arbitrary facet of $Q$, then $G= \II{F}$ is a strongly blocked facet of $\II{Q}$. Thus, the knowledge of $\II{Q}$ allows to determine affine hulls of all facets of $Q$. In other words, $\II{Q}$ uniquely determines a hyperplane description of $Q$. 
\end{proof}

\begin{lemma}
	\label{equiv:axis-aligned}
	Let $a,b \in \R_{>0}^d$ be such that the equality $T(b)= A(T(a))$ holds for some $A \in \Aff(\Z^d)$. Then $a$ and $b$ coincide up to permutation of components. 
\end{lemma}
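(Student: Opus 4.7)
The plan is to exploit the fact that any affine bijection between two $d$-simplices bijects their vertex sets, and then analyze the unimodular constraint on $A$ according to the image of the origin. First, I would observe that $A$ restricts to a bijection between the vertex sets $\{o,a_1 e_1,\ldots,a_d e_d\}$ and $\{o,b_1 e_1,\ldots,b_d e_d\}$. Writing $A(x)=Mx+c$ with $M$ a unimodular integer $d\times d$ matrix and $c\in \Z^d$, I would split into two cases.

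In the easy case $A(o)=o$, we have $c=o$, and the vertex bijection yields a permutation $\pi$ of $\{1,\ldots,d\}$ with $A(a_i e_i)=b_{\pi(i)} e_{\pi(i)}$. The $i$-th column of $M$ is therefore $(b_{\pi(i)}/a_i)\, e_{\pi(i)}$. Integrality of $M$ forces each ratio $b_{\pi(i)}/a_i$ to be a positive integer, while $|\det M|=1$ forces the product of these positive integers to equal $1$. Hence every ratio equals $1$, so $a_i=b_{\pi(i)}$ for all $i$, and $b$ is a permutation of $a$.

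In the harder case $A(o)\ne o$, the point $A(o)$ is a non-origin vertex of $T(b)$ and a lattice point, so $A(o)=b_j e_j$ for some $j$ with $b_j\in \Z_{>0}$. Applying the same reasoning to $A^{-1}$ yields $A^{-1}(o)=a_k e_k$ for some $k$ with $a_k\in\Z_{>0}$. The vertex bijection then restricts to a bijection $\tau\colon \{1,\ldots,d\}\setminus\{k\}\to \{1,\ldots,d\}\setminus\{j\}$ with $A(a_i e_i)=b_{\tau(i)} e_{\tau(i)}$. With $c=b_j e_j$, integrality of the columns of $M$ yields the divisibilities $a_k\mid b_j$ and $a_i\mid b_{\tau(i)}$ for $i\ne k$. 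Running the identical argument on $A^{-1}$ produces the reverse divisibilities $b_j\mid a_k$ and $b_{\tau(i)}\mid a_i$. Combined, these force $a_k=b_j$ and $a_i=b_{\tau(i)}$, so $b$ is obtained from $a$ by the permutation of $\{1,\ldots,d\}$ that extends $\tau$ by $k\mapsto j$.

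The main obstacle is the bookkeeping in the second case; a direct computation of $\det M$ would work but is laborious, whereas the symmetric trick of invoking $A^{-1}$ reduces the problem to elementary divisibility in $\Z$.
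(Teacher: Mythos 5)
Your proof is correct, and it takes a genuinely different route from the paper. The paper argues by induction on $d$: it observes that some facet of $T(a)$ through $o$ is mapped to a facet of $T(b)$ through $o$, assumes after relabeling that this is the coordinate facet $T(a_1,\ldots,a_{d-1})\times\{0\} \mapsto T(b_1,\ldots,b_{d-1})\times\{0\}$, applies the inductive hypothesis to these $(d-1)$-dimensional axis-aligned simplices, and then recovers the last component from volume preservation ($\prod_i a_i = \prod_i b_i$) under unimodular maps. You instead work directly with the representation $A(x)=Mx+c$, $M\in GL_d(\Z)$, $c\in\Z^d$, use that an affine bijection maps the vertex set of $T(a)$ onto that of $T(b)$, and split on whether $A(o)=o$: in the fixed-origin case the columns of $M$ are scaled standard basis vectors, and integrality plus $|\det M|=1$ forces all scaling factors to be $1$; in the other case the symmetric argument applied to $A$ and $A^{-1}$ gives two-sided divisibility (i.e., both ratios $b_{\tau(i)}/a_i$ and $a_i/b_{\tau(i)}$, and likewise $b_j/a_k$ and $a_k/b_j$, are positive integers), forcing equality. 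Your argument is non-inductive, fully explicit about where the origin can go (a point the paper's WLOG step handles only implicitly, along with the fact that the restriction of $A$ to the coordinate hyperplane is again unimodular), and it yields the small extra observation that when $A(o)\neq o$ the components $a_k$ and $b_j$ must be integers; the paper's induction-plus-volume argument is shorter and avoids the case analysis, at the price of leaving those details to the reader. One cosmetic caveat: since the $a_i,b_i$ are only assumed to be positive reals, your phrases ``$a_i\mid b_{\tau(i)}$'' should be read as ``the ratio $b_{\tau(i)}/a_i$ is a positive integer,'' which is exactly what your column-integrality argument delivers and what the two-sided comparison needs.
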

\begin{proof}
	We use induction on $d$. For $d=1$, the assertion is trivial. Let $d \ge 2$. One of the $d$ facets of $T(a)$ containing $o$ is mapped by $A$ to a facet of $T(b)$ that contains $o$. Without loss of generality we can assume that the facet $T(a_1,\ldots,a_{d-1}) \times \{0\}$ of $T(a)$ is  mapped to the facet $T(b_1,\ldots,b_{d-1}) \times \{0\}$ of $T(b)$. By the inductive assumption, $(a_1,\ldots,a_{d-1})$ and $(b_1,\ldots,b_{d-1})$ coincide up to permutation of components. Since unimodular transformations preserve the volume, $T(a)$ and $T(b)$ have the same volume. This means, $\prod_{i=1}^d a_i = \prod_{i=1}^d b_i$. Consequently, $a_d=b_d$ and we conclude that $a$ and $b$ coincide up to permutation of components. 
\end{proof}


\begin{proof}[Proof of Theorem~\ref{main}]
	For every $a \in \cA_d$, we introduce the $(d+5)$-dimensional integral lattice-free polytope
	\[
		P_a:=\II{T(\eta(a))},
	\]
	where
	\[
		\eta(x) := \xi_{d+4}(\psi_{d+1}(\phi_d(x)))
	\]
	and the functions $\xi_{d+4}, \psi_{d+1}$ and $\phi_d$ are defined by \eqref{phi:def}--\eqref{xi:def}.
	
	By \eqref{kappa:preserving} for each $a \in \cA_d$, we have $\kappa(\eta(a))=1$. For $a \in \cA_d$ the last component of $\phi_d(a)$ is even. This implies that the last component of $\psi_{d+1} (\phi_d(a))$ is odd. Thus, by Lemma~\ref{replacing:components}, $T(\eta(a))$ is strongly blocked lattice-free polytope which is not integral.
	
	Let $a, b \in \cA_d$ be such that the polytopes $P_a$ and $P_b$ coincide up to affine unimodular transformations. Then, by Lemma~\ref{int:hull:strongly:blocked}, $T(\eta(a))$ and $T(\eta(b))$ coincide up to affine unimodular transformations. But then, by Lemma~\ref{equiv:axis-aligned}, $\eta(a)$ and $\eta(b)$ coincide up to permutations. Since the components of $a$ and $b$ are sorted in the ascending order, the components of $\eta(a)$ and $\beta(b)$ too are sorted in the ascending order. Thus, we arrive at the equality $\eta(a) = \eta(b)$, which implies $a=b$. 
	
	In view of Proposition~\ref{sufficient:to:be:in:M:Z}, each $P_a$ with $a \in \cA_d$ belongs to $\cL^d$ but not to $\cM^d$. Thus, the equivalence classes of the polytopes $P_a$ with $a \in \cA_d$ with respect to identification up to affine unimodular transformations form a subset of 
	$(\cL^{d+5} \setminus \cM^{d+5} ) / \Aff(\Z^{d+5})$ of cardinality $|\cA_d|$. This yields the desired assertion. 
\end{proof}

\begin{proof}[Proof of Corollary~\ref{lower:bound:for:difference}]
	The assertion is a direct consequence of Theorem~\ref{main} and the asymptotic estimate 
	\[
		\ln \ln |\cA_d|  = \Omega \left( \frac{d}{\ln d} \right)
	\]
	of Konyagin \cite[Theorem~1]{Konyagin14}.
\end{proof}

\begin{remark}
	In view of the upper bound \( \ln \ln |\cA_d| = O(d)\) by S\'andor \cite[Theorem~2]{MR2025624},
	 the lower bound of Konyagin is optimal up to the logarithmic factor in the denominator.
\end{remark}

Since all known elements of $\cL^d$ are of the form $\II{P}$, for some strongly blocked lattice-free polytope $P$, we ask the following
\begin{question}
	Do there exist polytopes $L \in \cL^d$ which cannot be represented as $L=\II{P}$ for any strongly blocked lattice-free polytope $P$?  
\end{question}

If there is a gap between the families $\cL^d$ and the family 
\[
	\setcond{\II{P}}{P \subseteq \R^d \ \text{strongly blocked lattice-free polytope}},
\]
then it would be interesting to understand how irregular the polytopes from this gap can be. For example, one can ask the following 

\begin{question}
	Do there exist polytopes $L \in \cL^d$ with the property that no facet of $L$ is blocked?
\end{question}

\subsection*{Acknowledgements} 

	I would like to thank Christian Wagner for valuable comments.

\bibliographystyle{amsalpha}
\bibliography{literature}

\end{document}